
\documentclass{amsart}
\usepackage{amsthm}
\usepackage{graphicx}

\newtheorem{Thm}{Theorem}[section]
\newtheorem{Cor}[Thm]{Corollary}

\theoremstyle{definition}
\newtheorem{Def}[Thm]{Definition}
\newtheorem{Exa}[Thm]{Example}
\theoremstyle{remark}
\newtheorem{Rem}[Thm]{Remark}
\numberwithin{equation}{section}

\DeclareMathOperator{\Ii}{i}
\DeclareMathOperator{\Dd}{d}

\DeclareMathOperator{\drho}{d\rho}
\DeclareMathOperator{\dzeta}{d\zeta}

\DeclareMathOperator{\JJ}{J}

\let\Bbb=\mathbb

\begin{document}

\title[Locally one-to-one harmonic functions with starlike analytic part]
{Locally one-to-one harmonic functions with starlike analytic part}

\author[I. Hotta]{Ikkei Hotta}
\thanks{The first author was supported by Grant-in-Aid for JSPS Fellows No. 252550 
and Grant-in-Aid for Young Scientists (B) No. 26800053}
\address{Department of Mathematics\\
Tokyo Institute of Technology\\
2-12-1 Ookayama, Meguro-ku\\
Tokyo 152-8551\\
Japan}

\author[A. Michalski]{Andrzej Michalski}
\address{Department of Complex Analysis\\
The John Paul II Catholic University of Lublin\\ 
Ul. Konstantyn{\'o}w 1H\\
20-950 Lublin\\
Poland}

\keywords{Harmonic mappings, starlike conformal mappings}
\subjclass[2000]{Primary 31A05. Secondary 30C45}

\date{\today}

\begin{abstract}
Let $L_H$ denote the set of all normalized locally one-to-one and sense-preserving harmonic functions in the unit disc $\Delta$.
It is well-known that every complex-valued harmonic function in the unit disc $\Delta$ 
can be uniquely represented as $f = h + \overline{g}$, where $h$ and $g$ are analytic in $\Delta$.
In particular the decomposition formula holds true for functions of the class $L_H$. 
For a fixed analytic function $h$, an interesting problem arises - to describe
all functions $g$, such that $f$ belongs to $L_H$.

The case when $f\in L_H$ and $h$ is the identity mapping was considered in \cite{KlimekMichalski3}. 
More general results are given in \cite{KlimekMichalski4}, 
where $f\in L_H$ and $h$ is a convex analytic mapping. 
The focus of our present research is to characterize the set of all functions $f\in L_H$ having starlike analytic part $h$.
In this paper, we provide coefficient, distortion and growth estimates of $g$.
We also give growth and Jacobian estimates of $f$.
\end{abstract}

\maketitle
\setcounter{page}{1}
\section{Introduction}
A complex-valued harmonic function $f$ in the open unit disc $\Delta\subset\Bbb C$ 
can be uniquely represented as
\begin{align}\label{I1}
f = h + \overline{g},
\end{align}
where $h$ and $g$ are analytic in $\Delta$ with $g(0)=0$.
Hence, $f$ is uniquely determined by coefficients of the following power series expansions
\begin{align}\label{I2}
h(z) = \sum_{n=0}^{\infty} a_n z^n,
\quad
g(z) = \sum_{n=1}^{\infty} b_n z^n,
\quad z\in\Delta,
\end{align}
where $a_n \in \Bbb C$, $n=0,1,2,\cdots$ and $b_n \in \Bbb C$, $n=1,2,3,\cdots$.

Such a function $f$, not identically constant,
is said to be sense-preserving in $\Delta$ if and only if it satisfies
the equation
\begin{align}\label{I3}
g' = \omega h',
\end{align}
where $\omega$ is analytic in $\Delta$ with $|\omega(z)|<1$ for all $z \in \Delta$.
The function $\omega$ is called the second complex dilatation of $f$, and it is closely related
to the Jacobian of $f$ defined as follows
\begin{align}\label{I5}
J_f(z):=|h'(z)|^2-|g'(z)|^2, \quad z \in \Delta.
\end{align}
Recall that a necessary and sufficient condition for $f$ to be locally one-to-one
and sense-preserving in $\Delta$ is $J_f(z)>0$, $z \in \Delta$.
This is an immediate consequence of Lewy's theorem (see \cite{Lewy1}).
Observe that if $J_f(z)>0$ then $|h'(z)|>0$ and hence $g'(z)/h'(z)$ is well defined for every $z \in \Delta$.
Thus the dilatation $\omega$ of a locally univalent and sense-preserving function $f$ in $\Delta$
can be expressed as
\begin{align}\label{I4}
\omega (z)= \frac{g'(z)}{h'(z)}, \quad z \in \Delta.
\end{align}

Let $L_H$ denote the set of all locally one-to-one and sense-preserving harmonic functions $f$ 
in $\Delta$ satisfying \eqref{I1}, such that $h(0)=0$ and $h'(0)=1$ ($g(0)=0$ by the uniqueness of \eqref{I1}).
Note that the known family $S_H$ introduced in \cite{ClunieSheilSmall1} by Clunie and Sheil-Small
is a subset of $L_H$, in fact, $S_H$ consists of all one-to-one functions in $L_H$.

The main idea of our research is to characterize subclasses of $L_H$ 
defined by some additional geometric conditions on $h$.
The case when $h$ is the identity mapping was studied in \cite{KlimekMichalski3}.
The paper \cite{KlimekMichalski4} was devoted to the case when $h$ 
is a convex analytic mapping. 
In this paper we consider functions $f\in L_H$ having starlike analytic part $h$.
\begin{Def}\label{envII4}
Let $\alpha \in [0,1)$. We define the class $\check{L}_H^{\alpha}$ of all $f \in L_H$,
such that $|b_1|=\alpha$ and $h$ is a starlike analytic function, where $b_1$ is taken
from the power series expansion \eqref{I2} of $f$.
Additionally, we define the class
\begin{align*}
\check{L}_H:=\bigcup\limits_{\alpha \in [0,1)}\check{L}_H^{\alpha}.
\end{align*}
\end{Def}
\begin{Rem}
Note that the estimate $|b_1|<1$ holds for all $f\in L_H$ (see \cite{Duren1}, p. 79), 
in particular, this explains why we have taken $\alpha \in [0,1)$ in Definition \ref{envII4}.
\end{Rem}
\begin{Exa}\label{envII5}
For a fixed $\zeta\in\Delta$ consider $f_{\zeta}:=k+\overline{g_{\zeta}}$ in $\Delta$, where
\begin{align*}
k(z):=\frac{z}{(1-z)^2}, \quad z\in\Delta
\end{align*}
and
\begin{align*}
g_{\zeta}(z):=\left(\frac{1-\zeta}{1+\zeta}\right)^2\log{\frac{1+\zeta z}{1-z}}
+\frac{z}{(1-z)^2}-\left(\frac{1-\zeta}{1+\zeta}\right)\frac{2z}{1-z}, \quad z\in\Delta.
\end{align*}
Straightforward calculation leads to the formula for the dilatation $\omega_{\zeta}$ of $f_{\zeta}$, i.e.
\begin{align*}
\omega_{\zeta}(z):=\frac{z+\zeta}{1+\zeta z}, \quad z\in\Delta,
\end{align*}
which ensures that $f_{\zeta}$ is locally one-to-one and sense-preserving in $\Delta$.
Clearly, $k(0)=0$, $k'(0)=1$ and $g_{\zeta}(0)=0$, hence $f_{\zeta}\in L_H$.
Finally, $k$ is well-known starlike function (Koebe function) and one can easily check that $g'_{\zeta}(0)=\zeta$,
therefore $f_{\zeta}\in \check{L}_H^{\alpha}$ with $\alpha:=|\zeta|$.
\end{Exa}
\section{Results}
At the beginning of this section we present a connection, discovered by us, between $\check{L}_H^{\alpha}$
and the class of normalized harmonic mappings with convex analytic part (introduced in \cite{{KlimekMichalski4}}).
It does not seem to be surprising in view of the classical result concerning convex and starlike analytic functions
due to J. W. Alexander.  
\begin{Thm}\label{envII1}
If $f\in\check{L}_H^{\alpha}$, then $F:=H+\overline{G}$ belongs to $S_H$, $H$ is a convex analytic function
and $|G'(0)|=\alpha$, where $H$ and $G$ satisfy the conditions $h(z)=zH'(z)$ and $g(z)=zG'(z)$, $z\in\Delta$ 
with the normalization $H(0)=0$ and $G(0)=0$.
\end{Thm}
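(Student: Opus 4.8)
Here is the strategy I would follow.

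The plan is to transfer everything to the analytic data $H,G$, use Alexander's theorem to obtain convexity of $H$, verify by hand that $F$ is locally one-to-one and sense-preserving, and finally invoke the known description of harmonic mappings with convex analytic part from \cite{KlimekMichalski4}. First I would record the coefficient relations. Writing $H(z)=\sum_{n\ge 1}A_nz^n$ and $G(z)=\sum_{n\ge 1}B_nz^n$ (so that $H(0)=G(0)=0$), the identities $h(z)=zH'(z)$ and $g(z)=zG'(z)$ force $a_n=nA_n$ and $b_n=nB_n$ for $n\ge 1$; in particular $A_1=a_1=1$ and $B_1=b_1$, hence $H'(0)=1$, $G'(0)=b_1$ and $|G'(0)|=|b_1|=\alpha$. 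Equivalently $H'(z)=h(z)/z$ and $G'(z)=g(z)/z$, both analytic on $\Delta$ because $h(0)=g(0)=0$. Since $h$ is a normalized starlike analytic function, Alexander's theorem says precisely that the normalized $H$ determined by $zH'(z)=h(z)$ is a convex analytic function; this is the asserted convexity of $H$, and it also gives $H'\ne 0$ on $\Delta$.

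The heart of the argument is to prove that $F=H+\overline G$ is locally one-to-one and sense-preserving, i.e.\ that its dilatation $\omega_F=G'/H'$ has modulus less than $1$ on $\Delta$. By the previous paragraph $\omega_F(z)=\frac{g(z)/z}{h(z)/z}=\frac{g(z)}{h(z)}=:\phi(z)$, which is analytic on $\Delta$ (a starlike $h$ vanishes only at the origin, with a simple zero there cancelled by $g(0)=0$) and satisfies $\phi(0)=b_1$, so $|\phi(0)|=\alpha<1$. Differentiating $\phi=g/h$ and substituting $g'=\omega_f h'$ yields the identity
\begin{align*}
z\phi'(z)=p(z)\bigl(\omega_f(z)-\phi(z)\bigr),\qquad p(z):=\frac{zh'(z)}{h(z)},
\end{align*}
where $\re p>0$ on $\Delta$ (starlikeness of $h$) and $|\omega_f|<1$ on $\Delta$ ($f$ is locally one-to-one and sense-preserving). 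I would then run a maximum-principle (Jack-type) argument: if $|\phi|$ reached the value $1$ somewhere in $\Delta$, choose the smallest $r_0\in(0,1)$ with $\max_{|z|=r_0}|\phi(z)|=1$ and a point $z_0$ with $|z_0|=r_0$ attaining it; since $|\phi|$ then attains its maximum over the closed disc $\{|z|\le r_0\}$ at $z_0$, the number $m:=z_0\phi'(z_0)/\phi(z_0)$ is real and non-negative. Inserting $z_0\phi'(z_0)=m\phi(z_0)$ into the identity and solving for $\omega_f(z_0)$ gives $\omega_f(z_0)=\phi(z_0)\bigl(p(z_0)+m\bigr)/p(z_0)$, whence
\begin{align*}
|\omega_f(z_0)|=\frac{|p(z_0)+m|}{|p(z_0)|}\ge 1,
\end{align*}
because $|p(z_0)+m|^2=|p(z_0)|^2+2m\,\re p(z_0)+m^2\ge|p(z_0)|^2$. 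This contradicts $|\omega_f(z_0)|<1$, so $|\phi|<1$ throughout $\Delta$ (the case $\phi\equiv b_1$ being trivial). Together with $H(0)=G(0)=0$ and $H'(0)=1$, this shows $F\in L_H$.

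To finish, $F$ lies in $L_H$ and has convex analytic part $H$, so the characterization of harmonic mappings with convex analytic part in \cite{KlimekMichalski4} yields that $F$ is one-to-one, i.e.\ $F\in S_H$; combined with the convexity of $H$ and the identity $|G'(0)|=\alpha$ this is the full statement. I expect the middle step to be the only genuine obstacle: one must transfer the dilatation bound $|g'/h'|<1$ for $f$ into the bound $|g/h|<1$ for the ratio that serves as the dilatation of $F$, and it is exactly the starlikeness of $h$ (entering through $\re(zh'/h)>0$) that powers the extremal argument; for a general locally one-to-one sense-preserving $f$ the inequality $|g/h|<1$ may fail.
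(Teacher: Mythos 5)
Your proposal is correct, and its skeleton matches the paper's: reduce to the analytic data $H,G$, get convexity of $H$ from Alexander's theorem, show that the dilatation $G'/H'=g/h$ of $F$ has modulus less than $1$ so that $F\in L_H$, and then invoke the result of \cite{KlimekMichalski4} on harmonic mappings with convex analytic part to upgrade local to global univalence. Where you genuinely diverge is in the proof of the key inequality $|g(z)|<|h(z)|$ (equivalently $|G'|<|H'|$). The paper uses starlikeness geometrically: the segment $[0,h(z)]$ lies in $h(\Delta)$, and integrating $g\circ h^{-1}$, whose derivative is $g'/h'$ of modulus less than $1$, along that segment gives $|g(z)|<|h(z)|$ in two lines. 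You instead use the analytic characterization $\re\bigl(zh'(z)/h(z)\bigr)>0$ and a Jack-lemma/maximum-principle argument on $\phi=g/h$, via the identity $z\phi'=p\,(\omega_f-\phi)$ with $p=zh'/h$; the computation and the contradiction $|\omega_f(z_0)|\ge 1$ at a first boundary-touching point are sound (note $p\ne 0$ since $\re p>0$, and $m\ge 0$ suffices). The paper's route is shorter and makes the role of the image geometry transparent; yours avoids inverting $h$ and integrating in the image domain, and the subordination-style machinery adapts more readily to variants (e.g.\ starlike of order $\beta$ or other conditions phrased through $zh'/h$), at the cost of the usual bookkeeping about the existence of the critical radius and the sign of $z_0\phi'(z_0)/\phi(z_0)$. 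Both proofs lean on the same external ingredient from \cite{KlimekMichalski4} for the final step, so the proposal is a valid alternative proof of the theorem.
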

\begin{proof}
Let $f\in\check{L}_H^{\alpha}$. By definition of $\check{L}_H^{\alpha}$, $h$ is a normalized starlike function. 
Hence, by Alexander's theorem, the function $H$ is a normalized convex function, namely, $H(0) = 0$ and $H'(0)=\lim_{z\to 0}h(z)/z=h'(0)=1$.
Moreover, $G'(0)=\lim_{z\to 0}g(z)/z=g'(0)$ and $|G'(0)|=\alpha$.
In particular, $|G'(0)|<|H'(0)|$.
Now, observe that for all $z\in \Delta\setminus\{0\}$ the interval $[0,h(z)]$ is a subset of $h(\Delta)$
since $h$ is a starlike function. Hence
\begin{align}\label{II1}
|g(z)|=\left|g\circ h^{-1}(h(z))\right|
\leq\int_0^{h(z)}\left|\frac{\Dd(g\circ h^{-1})}{\Dd\zeta}(\zeta)\right||\Dd\zeta|
<\int_0^{h(z)}|\Dd\zeta|=|h(z)|.
\end{align}
Again, by definition of $\check{L}_H^{\alpha}$, $f$ is a sense-preserving harmonic function which together with \eqref{II1} gives
\begin{align*}
|G'(z)|=\lim_{\zeta\to z}\left|\frac{g(\zeta)}{\zeta}\right|
<\lim_{\zeta\to z}\left|\frac{h(\zeta)}{\zeta}\right|=|H'(z)|,\quad z\in\Delta\setminus\{0\}.
\end{align*}
It shows that $F$ is a locally univalent and sense-preserving harmonic function in $\Delta$.
Finally, appealing to \cite[Corollary 2.3]{KlimekMichalski4}, we conclude that $F\in S_H$.
\end{proof}

For $f\in\check{L}_H^{\alpha}$, the classical theory of univalent functions says (see e.g. \cite{Duren2}, \cite{Goodman1})
\begin{align}\label{II13}
|a_n| \leq n, \quad n=2,3,4,\cdots\ .
\end{align}
The following theorem gives an estimate of the coefficient $b_n$.
\begin{Thm}\label{envII2}
If $f\in\check{L}_H^{\alpha}$, then we have
\begin{align}\label{II2}
|b_2|\leq 2\alpha+\frac{1-\alpha^2}{2}
\end{align}
and
\begin{align}\label{II3}
|b_n|\leq \alpha+\sqrt{(n-\alpha^2)(n-1)},\quad n=3,4,5,\cdots\ .
\end{align}
\end{Thm}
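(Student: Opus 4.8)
The plan is to exploit the dilatation equation $g'=\omega h'$ from \eqref{I3}--\eqref{I4} at the level of Taylor coefficients. Writing $\omega(z)=\sum_{k\ge 0}c_k z^k$, one has $c_0=\omega(0)=g'(0)/h'(0)=b_1$, so $|c_0|=\alpha$; since $|\omega|<1$ on $\Delta$, Parseval gives $\sum_{k\ge 1}|c_k|^2\le 1-|c_0|^2=1-\alpha^2$, and the Schwarz--Pick lemma gives in addition $|c_1|=|\omega'(0)|\le 1-\alpha^2$. Multiplying $g'=\omega h'$ out and comparing the coefficients of $z^{n-1}$ produces the identity $n b_n=\sum_{l=1}^{n}l\,a_l\,c_{n-l}$ for every $n\ge 1$, in which I will use the classical bound $|a_l|\le l$ for starlike $h$ (this is \eqref{II13}, extended to $l=1$ by $a_1=1$).

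For $n=2$ the identity reads $2b_2=2a_2c_0+a_1c_1=2a_2c_0+c_1$, whence $|b_2|\le\alpha|a_2|+\frac12|c_1|\le 2\alpha+\frac{1-\alpha^2}{2}$, which is \eqref{II2}. For $n\ge 3$ I would isolate the single term carrying the prescribed value $c_0=b_1$ and estimate the remainder, writing $n|b_n|\le n|a_n||c_0|+\bigl|\sum_{l=1}^{n-1}l\,a_l\,c_{n-l}\bigr|$. A naive Cauchy--Schwarz estimate of the remaining sum, say $\bigl(\sum_{l=1}^{n-1}l^2|a_l|^2\bigr)^{1/2}\bigl(\sum_{k=1}^{n-1}|c_k|^2\bigr)^{1/2}$ with $|a_l|\le l$ used throughout, is too lossy -- it only gives an $O(n^{3/2})$ bound, whereas \eqref{II3} is $O(n)$ -- so the dilatation has to be controlled through its sup norm, not merely through $\sum|c_k|^2$. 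Concretely, I would pass to the representation $g=qh$ with $q:=g/h$ analytic, $|q|<1$ and $q(0)=b_1$ (this is implicit in the proof of Theorem \ref{envII1}, where $|g(z)|<|h(z)|$ is established), so that $|q(z)|\le\frac{|z|+\alpha}{1+\alpha|z|}$ by Schwarz--Pick; combining this with the growth of the starlike function $h$ and the Cauchy-type estimate $|b_n|\le r^{-n}\sup_{|z|=r}|g(z)|$ (or its integral-mean refinement) yields a one-parameter family of bounds in $r\in(0,1)$, and a suitable choice of $r$ should deliver \eqref{II3}.

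The hard part will be exactly this last step for $n\ge 3$: organizing the estimates so that the constants combine into precisely $\alpha+\sqrt{(n-\alpha^2)(n-1)}$, rather than a weaker (if morally equivalent) $O(n)$ bound, and keeping the dependence on $\alpha$ honest throughout the Schwarz--Pick and optimization steps. At the end one checks that \eqref{II2} is in fact strictly smaller than the value $\alpha+\sqrt{(2-\alpha^2)\cdot 1}$ that \eqref{II3} would formally give at $n=2$, which is why the coefficient $b_2$ is recorded separately.
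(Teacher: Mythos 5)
Your treatment of \eqref{II2} is exactly the paper's argument: from $g'=\omega h'$ you compare coefficients of $z$ to get $2b_2=2a_2c_0+c_1$, and then use $|c_0|=\alpha$, the Schwarz--Pick bound $|c_1|\le 1-\alpha^2$, and $|a_2|\le 2$. That part is fine. For \eqref{II3}, however, you have only a plan, and the plan cannot produce the stated bound. The sup-norm Cauchy estimate $|b_n|\le r^{-n}\sup_{|z|=r}|g(z)|$ combined with $|g(z)|\le\frac{(|z|+\alpha)|z|}{(1+\alpha|z|)(1-|z|)^2}$ is killed by the factor $(1-r)^{-2}$: optimizing in $r$ forces $r\approx 1-2/n$ and leaves a bound of order $n^2$ (asymptotically about $e^2n^2/4$), not order $n$. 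The integral-mean refinement is no better: it would first require a Baernstein-type majorization of the means of the starlike $h$ by those of the Koebe function, $\frac{1}{2\pi}\int_0^{2\pi}|h(re^{\Ii\theta})|\dtheta\le \frac{r}{1-r^2}$, and even granting that, the optimal choice of $r$ yields a bound asymptotic to $(e/2)\,n$, while the target satisfies $\alpha+\sqrt{(n-\alpha^2)(n-1)}\le n$; already at $\alpha=0$, $n=3$ the best this route gives is $\min_r\frac{1}{r(1-r^2)}=\frac{3\sqrt3}{2}\approx 2.60$, which exceeds $\sqrt6\approx 2.45$. So the step you yourself flag as ``the hard part'' is genuinely missing, and no choice of $r$ closes it.

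The paper proves \eqref{II3} by a different reduction, whose main ingredient you already invoke for other purposes: Theorem \ref{envII1}. Writing $h(z)=zH'(z)$ and $g(z)=zG'(z)$, the function $F=H+\overline{G}$ is a sense-preserving (indeed univalent) harmonic map whose analytic part $H$ is convex and whose co-analytic part satisfies $|G'(0)|=\alpha$. Comparing power series gives $b_n=nB_n$, where $B_n$ are the coefficients of $G$, and the known coefficient estimate for harmonic maps with convex analytic part, \cite[Theorem 3.1]{KlimekMichalski4}, namely $|B_n|\le\bigl(\alpha+\sqrt{(n-\alpha^2)(n-1)}\bigr)/n$, then yields \eqref{II3} at once. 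In other words, the precise constant is inherited from the convex case through the Alexander-type correspondence, rather than extracted from growth or Cauchy estimates of $g$ itself; to repair your proof you should replace the whole $n\ge3$ analysis by this reduction (or reprove the convex-case coefficient bound directly).
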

\begin{proof}
If $f\in\check{L}_H^{\alpha}$, then by Theorem \ref{II1} the function $F:=H+\overline{G}$ belongs to $S_H$, $H$ is a convex analytic function
and $|G'(0)|=\alpha$, where $H$ and $G$ satisfy the conditions $h(z)=zH'(z)$ and $g(z)=zG'(z)$ for all $z\in\Delta$ with the normalization $H(0)=0$ and $G(0)=0$.
Clearly $G$ can be expanded in a power series, say
\begin{align*}
G(z) = \sum_{n=1}^{\infty} B_n z^n,
\quad z\in\Delta,
\end{align*}
where $B_n \in \Bbb C$, $n=1,2,3,\cdots$.
Using this expansion together with the expansion \eqref{I2} of $g$
and the formula $g(z)=zG'(z)$, we obtain $b_n=nB_n$, $n=1,2,3,\cdots$.
Next, by applying \cite[Theorem 3.1]{KlimekMichalski4} we have
\begin{align*}
|b_n|\leq n\frac{\alpha+\sqrt{(n-\alpha^2)(n-1)}}{n},\quad n=2,3,4,\cdots, 
\end{align*}
which gives \eqref{II3}.
To improve the estimate in the case $n=2$, consider the function
\begin{align*}
F(z):=\frac{\omega(z)-\omega(0)}{1-\overline{\omega(0)}\omega(z)},\quad z\in\Delta, 
\end{align*}
where $\omega$ is the dilatation of $f$. Since $\omega$ is analytic in $\Delta$
it has a power series expansion, say
\begin{align*}
\omega(z) = \sum_{n=0}^{\infty} c_n z^n,
\quad z\in\Delta,
\end{align*}
where $c_n \in \Bbb C$, $n=0,1,2,\cdots$ and $|c_0|=|\omega(0)|=|g'(0)|=|b_1|=\alpha$.
Recall that $|\omega(z)|<1$ for all $z\in\Delta$. Hence we can apply the Schwarz lemma to $F$
and obtain $|F'(0)|\leq 1$, which yields 
\begin{align}\label{II12}
|c_1|=|\omega'(0)|\leq 1-|c_0|^2.
\end{align}
On the other hand, the formula \eqref{I4} gives  
\begin{align*}
2b_2=2a_2c_0+c_1,
\end{align*}
which together with \eqref{II13} and \eqref{II12} leads to the estimate
\begin{align*}
2|b_2|\leq 4|c_0|+1-|c_0|^2.
\end{align*}
Since this is equivalent to \eqref{II2}, the proof is completed.
\end{proof}
We have the following immediate corollary from Theorem \ref{envII2}.
\begin{Cor}\label{envII3}
If $f\in\check{L}_H$ then $|b_n| < n$, $n=2,3,4,\cdots$.
\end{Cor}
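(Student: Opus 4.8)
The plan is to deduce Corollary~\ref{envII3} directly from the estimates of Theorem~\ref{envII2}, so the only work is to verify two elementary inequalities in the single real parameter $\alpha\in[0,1)$. First I would fix $f\in\check{L}_H$; by the definition of $\check{L}_H$ there is some $\alpha\in[0,1)$ with $f\in\check{L}_H^{\alpha}$, and then Theorem~\ref{envII2} applies to $f$ with this value of $\alpha$.

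For the case $n=2$, I would take the bound \eqref{II2} and show $2\alpha+\tfrac{1-\alpha^2}{2}<2$ for all $\alpha\in[0,1)$. This rearranges to $4\alpha+1-\alpha^2<4$, i.e. $0<\alpha^2-4\alpha+3=(\alpha-1)(\alpha-3)$, which is strictly positive precisely when $\alpha<1$ (since then $\alpha-1<0$ and $\alpha-3<0$). Hence $|b_2|<2$. For $n\geq3$, I would start from \eqref{II3} and show $\alpha+\sqrt{(n-\alpha^2)(n-1)}<n$. Writing this as $\sqrt{(n-\alpha^2)(n-1)}<n-\alpha$ (the right side is positive since $\alpha<1\leq n$), I would square both sides to get the equivalent inequality $(n-\alpha^2)(n-1)<(n-\alpha)^2$. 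Expanding, the left side is $n^2-n-\alpha^2 n+\alpha^2$ and the right side is $n^2-2\alpha n+\alpha^2$, so the inequality reduces to $-n-\alpha^2 n<-2\alpha n$, i.e. after dividing by $n>0$ and rearranging, $0<1-2\alpha+\alpha^2=(1-\alpha)^2$, which holds for all $\alpha\neq1$, in particular for $\alpha\in[0,1)$. Therefore $|b_n|<n$ for $n\geq3$ as well.

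There is essentially no obstacle here; the corollary is a routine consequence of Theorem~\ref{envII2} together with these two sign computations. The only minor point worth stating carefully is that the bounds in Theorem~\ref{envII2} are in terms of the specific $\alpha=|b_1|$ attached to $f$, and that the estimates $2\alpha+\tfrac{1-\alpha^2}{2}<2$ and $\alpha+\sqrt{(n-\alpha^2)(n-1)}<n$ are \emph{strict} on the whole interval $[0,1)$ — both degenerate to equality only at $\alpha=1$, which is excluded — so the conclusion $|b_n|<n$ is strict uniformly in $n$. I would present the proof as: invoke Theorem~\ref{envII2}, then dispatch $n=2$ via the factorization $(\alpha-1)(\alpha-3)>0$ and $n\geq3$ via $(1-\alpha)^2>0$ after squaring.
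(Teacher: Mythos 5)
Your proof is correct and follows exactly the route the paper intends: the corollary is stated there as an immediate consequence of Theorem~\ref{envII2}, and your verification that $2\alpha+\tfrac{1-\alpha^2}{2}<2$ and $\alpha+\sqrt{(n-\alpha^2)(n-1)}<n$ for $\alpha\in[0,1)$ simply fills in the elementary computations the authors left to the reader.
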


Recall that by definition the analytic part $h$ of $f\in\check{L}_H$ is starlike. Hence, it is known that
\begin{align}\label{II4}
\frac{1-|z|}{(1+|z|)^3}\leq |h'(z)|\leq \frac{1+|z|}{(1-|z|)^3}, \quad z\in\Delta.
\end{align}
Our next result is the distortion estimate of the anti-analytic part $g$ of $f\in\check{L}_H^{\alpha}$.
\begin{Thm}
If $f \in \check{L}_H^{\alpha}$, then
\begin{align}\label{II5}
|g'(z)| \geq 
\begin{cases}
\displaystyle\frac{(\alpha-|z|)(1-|z|)}{(1-\alpha |z|)(1+|z|)^3},& |z|  < \alpha,\\[8pt]
0, & |z| \geq \alpha,
\end{cases}
\end{align}
where $z\in\Delta$ and
\begin{align}\label{II6}
|g'(z)| \leq \frac{(\alpha+|z|)(1+|z|)}{(1+\alpha |z|)(1-|z|)^3}, \quad z \in \Delta.
\end{align}
\end{Thm}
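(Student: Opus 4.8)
The plan is to exploit the factorization $g' = \omega h'$ from \eqref{I3}, so that $|g'(z)| = |\omega(z)|\,|h'(z)|$ for every $z \in \Delta$, and then estimate the two factors separately. The factor $|h'(z)|$ is controlled by the classical distortion bounds \eqref{II4} for starlike $h$, so the heart of the matter is a two-sided estimate of $|\omega(z)|$ in terms of $|z|$ and the value $\alpha = |\omega(0)|$.

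First I would record that $\omega(0) = g'(0)/h'(0) = b_1$, whence $|\omega(0)| = \alpha$ by Definition \ref{envII4}. Then, just as in the proof of Theorem \ref{envII2}, I would introduce the M\"obius-composed map $\psi(z) := \left(\omega(z) - \omega(0)\right)\big/\left(1 - \overline{\omega(0)}\,\omega(z)\right)$, which is analytic, maps $\Delta$ into itself and vanishes at the origin; the Schwarz lemma then gives $|\psi(z)| \le |z|$. Solving this inequality for $\omega(z)$ shows that $\omega(z)$ lies in the closed disc obtained as the image of $\{|w| \le |z|\}$ under $w \mapsto \left(w + \omega(0)\right)\big/\left(1 + \overline{\omega(0)}\,w\right)$. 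Computing the points of this disc nearest to and farthest from the origin — they lie on the ray through $\omega(0)$ — yields
\begin{align*}
|\omega(z)| \le \frac{\alpha + |z|}{1 + \alpha |z|},\qquad z \in \Delta,
\end{align*}
and, when $|z| < \alpha$,
\begin{align*}
|\omega(z)| \ge \frac{\alpha - |z|}{1 - \alpha |z|},
\end{align*}
while for $|z| \ge \alpha$ the disc contains the origin and no nontrivial lower bound is available.

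Combining the upper bound for $|\omega(z)|$ with the upper estimate in \eqref{II4} gives \eqref{II6} immediately. For the lower bound, when $|z| < \alpha$ I would multiply the lower bound for $|\omega(z)|$ by the lower estimate for $|h'(z)|$ in \eqref{II4} to obtain \eqref{II5}; when $|z| \ge \alpha$ the asserted bound in \eqref{II5} is just $|g'(z)| \ge 0$, which is trivially true.

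I do not anticipate a real obstacle: the only point demanding a little care is the elementary but slightly fiddly computation identifying the extremal values of $|\omega(z)|$ over the Schwarz-lemma disc, together with the case distinction at $|z| = \alpha$; everything else is a direct substitution into \eqref{II4}.
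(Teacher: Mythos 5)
Your proposal is correct and follows essentially the same route as the paper: the paper also applies the Schwarz lemma to the M\"obius-composed dilatation $\Omega(z)=\bigl(e^{-\Ii\varphi}\omega(z)-\alpha\bigr)\big/\bigl(1-\alpha e^{-\Ii\varphi}\omega(z)\bigr)$ to obtain the two-sided bound $\frac{\alpha-|z|}{1-\alpha|z|}\leq|\omega(z)|\leq\frac{\alpha+|z|}{1+\alpha|z|}$, and then combines it with \eqref{II4} via $g'=\omega h'$. The only cosmetic difference is that the paper phrases the intermediate step as a disc inclusion plus the triangle inequality rather than as an explicit computation of the nearest and farthest points, which is the same elementary fact.
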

\begin{proof}
Let $\omega$ of the form \eqref{I4} be the dilatation of $f\in\check{L}_H^{\alpha}$
and let $g'(0)=\alpha e^{\Ii\varphi}$. Then the function
\begin{equation*}
\Omega(z)=\frac{ e^{-\Ii\varphi}\omega(z)-\alpha}{1-\alpha e^{-\Ii\varphi}\omega(z)}, \quad z\in\Delta
\end{equation*}
satisfies the assumptions of the Schwarz lemma, which gives
\begin{equation*}
\left| e^{-\Ii\varphi}\omega(z)-\alpha\right|\leq|z|\left|1-\alpha e^{-\Ii\varphi}\omega(z)\right|, \quad z\in\Delta.
\end{equation*}
Equivalently we can write
\begin{equation*}
\left| e^{-\Ii\varphi}\omega(z)-\frac{\alpha(1-|z|^2)}{1-\alpha^2|z|^2}\right|
\leq\frac{(1-\alpha^2)|z|}{1-\alpha^2|z|^2}, \quad z\in\Delta
\end{equation*}
and the equality holds only for the functions satisfying
\begin{align*}
\omega(z)=e^{i\varphi}\frac{e^{i\psi}z+\alpha}{1+\alpha e^{i\psi}z}, \quad z \in \Delta,
\end{align*}
where $\psi \in \Bbb R$.
Hence, by the triangle inequality we have
\begin{align}\label{II7}
\frac{\alpha-|z|}{1-\alpha|z|}\leq |\omega(z)|\leq\frac{\alpha+|z|}{1+\alpha|z|}, \quad z\in\Delta.
\end{align}
Finally, applying the estimate \eqref{II7} together with \eqref{II4} to the identity \eqref{I3}
we obtain \eqref{II5} and \eqref{II6}, so the proof is completed.
\end{proof}
\begin{Cor}
If $f\in\check{L}_H$, then
\begin{align*}
|g'(z)| \leq \frac{1+|z|}{(1-|z|)^3}, \quad z \in \Delta.
\end{align*}
\end{Cor}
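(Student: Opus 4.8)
The plan is to derive this at once from the upper distortion bound \eqref{II6} of the preceding theorem, using only the definition $\check{L}_H=\bigcup_{\alpha\in[0,1)}\check{L}_H^{\alpha}$. First I would fix $f\in\check{L}_H$, so that $f\in\check{L}_H^{\alpha}$ for some $\alpha\in[0,1)$; then \eqref{II6} applies and gives $|g'(z)|\leq\frac{(\alpha+|z|)(1+|z|)}{(1+\alpha|z|)(1-|z|)^3}$ for every $z\in\Delta$.

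It then remains only to bound the right-hand side by $\frac{1+|z|}{(1-|z|)^3}$, which amounts to checking that $\frac{\alpha+|z|}{1+\alpha|z|}\leq 1$. After multiplying through by the positive quantity $1+\alpha|z|$, this is equivalent to $\alpha+|z|\leq 1+\alpha|z|$, i.e. to $(1-\alpha)(1-|z|)\geq 0$, which is clear since $\alpha<1$ and $|z|<1$. Since the resulting bound no longer depends on $\alpha$, it is valid for every $f\in\check{L}_H$, and the corollary follows by combining the two estimates.

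There is really no genuine obstacle here: the single point to verify is the elementary monotonicity of $t\mapsto\frac{t+|z|}{1+t|z|}$ on $[0,1)$ (equivalently the sign of $(1-\alpha)(1-|z|)$), and the argument is the exact analogue of the passage from Theorem \ref{envII2} to Corollary \ref{envII3}, with the M\"obius factor $\frac{\alpha+|z|}{1+\alpha|z|}$ playing the role there played by the coefficient ratio. One may also remark that the estimate is asymptotically sharp as $\alpha\to 1^{-}$, recovering the classical upper bound in \eqref{II4} for $|h'|$ of a starlike $h$; this is not needed for the statement but explains why the constant cannot be improved.
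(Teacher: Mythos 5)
Your proposal is correct and is exactly the argument the paper intends: the corollary is stated as an immediate consequence of \eqref{II6}, obtained by noting that $\frac{\alpha+|z|}{1+\alpha|z|}\leq 1$ since $(1-\alpha)(1-|z|)\geq 0$ for $\alpha\in[0,1)$ and $z\in\Delta$. Nothing further is needed.
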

Using the distortion estimates we can easily deduce the following Jacobian estimates of $f$.
\begin{Thm}
If $f \in \check{L}_H^{\alpha}$, then
\begin{align*}
\frac{(1-\alpha^2)(1-|z|)^3}{(1+\alpha|z|)^2(1+|z|)^5}\leq J_f(z)\leq
\begin{cases}
\displaystyle \frac{(1-\alpha^2)(1+|z|)^3}{(1-\alpha|z|)^2(1-|z|)^5},\quad &|z|<\alpha,\\[8pt]
\displaystyle \frac{(1+|z|)^2}{(1-|z|)^6},\quad &|z|\geq\alpha,
\end{cases}
\end{align*}
where $z\in\Delta$.
\end{Thm}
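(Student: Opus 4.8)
The plan is to exploit the factorization $J_f(z) = |h'(z)|^2 - |g'(z)|^2 = |h'(z)|^2\bigl(1 - |\omega(z)|^2\bigr)$, which follows immediately from \eqref{I5} together with the dilatation identity \eqref{I3}, and then to feed in the two distortion estimates already established: the starlikeness bound \eqref{II4} for $|h'(z)|$ and the Schwarz-lemma bound \eqref{II7} for $|\omega(z)|$. Since both factors are nonnegative, it suffices to combine the appropriate one-sided bound on each factor to obtain the claimed two-sided estimate for $J_f$.

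First I would record the elementary identity $(1+\alpha t)^2 - (\alpha+t)^2 = (1-\alpha^2)(1-t^2)$ (and the analogous one with $t$ replaced by $-t$), valid for $t = |z| \in [0,1)$; this is exactly what turns the ratios appearing in \eqref{II7} into the clean factors $1-\alpha^2$ and $1-|z|^2$ occurring in the statement. For the lower bound on $J_f$, I would use the upper estimate $|\omega(z)| \leq (\alpha+|z|)/(1+\alpha|z|)$ from \eqref{II7}, which gives $1 - |\omega(z)|^2 \geq (1-\alpha^2)(1-|z|^2)/(1+\alpha|z|)^2$, together with the lower estimate $|h'(z)| \geq (1-|z|)/(1+|z|)^3$ from \eqref{II4}; multiplying these and writing $1-|z|^2 = (1-|z|)(1+|z|)$ to collect the powers of $1\pm|z|$ produces precisely $(1-\alpha^2)(1-|z|)^3/\bigl[(1+\alpha|z|)^2(1+|z|)^5\bigr]$.

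For the upper bound I would split into the two regimes of the statement. When $|z| < \alpha$ the lower estimate $|\omega(z)| \geq (\alpha-|z|)/(1-\alpha|z|)$ in \eqref{II7} is genuine, and it yields $1 - |\omega(z)|^2 \leq (1-\alpha^2)(1-|z|^2)/(1-\alpha|z|)^2$; pairing this with the upper estimate $|h'(z)| \leq (1+|z|)/(1-|z|)^3$ from \eqref{II4} and simplifying gives the first branch. When $|z| \geq \alpha$ the quantity $(\alpha-|z|)/(1-\alpha|z|)$ is $\leq 0$, so only the trivial lower bound $|\omega(z)| \geq 0$ is available, forcing $1 - |\omega(z)|^2 \leq 1$; then $J_f(z) \leq |h'(z)|^2 \leq (1+|z|)^2/(1-|z|)^6$, which is the second branch.

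There is no serious obstacle here: the argument is a direct substitution once \eqref{II4} and \eqref{II7} are at hand. The only point requiring care is the case distinction at $|z| = \alpha$, where the lower estimate on $|\omega|$ degenerates and must be replaced by $|\omega| \geq 0$; beyond that, one simply needs to be careful with the bookkeeping of the exponents of $1\pm|z|$ when the factor $1-|z|^2$ is absorbed into the powers coming from \eqref{II4}.
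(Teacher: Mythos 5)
Your proposal is correct and follows essentially the same route as the paper: factor $J_f(z)=|h'(z)|^2\bigl(1-|\omega(z)|^2\bigr)$, then substitute the starlike distortion bound \eqref{II4} and the dilatation bound \eqref{II7}, with the case split at $|z|=\alpha$ handled exactly as in the paper. The algebraic simplification via $(1\pm\alpha t)^2-(\alpha\pm t)^2=(1-\alpha^2)(1-t^2)$ checks out, so no gaps remain.
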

\begin{proof}
Observe that if $f\in \check{L}_H^{\alpha}$ then $h'$ does not vanish in $\Delta$
and we can write the Jacobian of $f$ given by \eqref{I5} in the form
\begin{align*}
\JJ_f(z)=|h'(z)|^2\left(1-|\omega(z)|^2\right), \quad z\in\Delta,
\end{align*}
where $\omega$ is the dilatation of $f$.
By applying \eqref{II4} and \eqref{II7} to the above formula we obtain
\begin{align*}
\frac{(1-\alpha^2)(1-|z|^2)}{(1+\alpha|z|)^2}\frac{(1-|z|)^2}{(1+|z|)^6}\leq J_f(z)\leq
\begin{cases}
\displaystyle \frac{(1-\alpha^2)(1-|z|^2)}{(1-\alpha|z|)^2}\frac{(1+|z|)^2}{(1-|z|)^6},\quad &|z|<\alpha,\\[8pt]
\displaystyle \frac{(1+|z|)^2}{(1-|z|)^6},\quad &|z|\geq\alpha
\end{cases}
\end{align*}
and the proof is completed.
Note that these estimates can also be deduced from a more general result given in \cite{KlimekMichalski5}.
\end{proof}

The growth estimate of the analytic part $h$ of $f\in\check{L}_H$ is known to be of the form
\begin{align}\label{II8}
\frac{|z|}{(1+|z|)^2}\leq |h(z)|\leq \frac{|z|}{(1-|z|)^2}, \quad z\in\Delta.
\end{align} 
In the following theorem we give the growth estimate of the anti-analytic part $g$.
\begin{Thm}
If $f \in \check{L}_H^{\alpha}$, then
\begin{align}\label{II9}
|g(z)| \geq
\begin{cases}
\displaystyle\frac{|z|(\alpha-|z|)}{(1-\alpha |z|)(1+|z|)^2},& |z|  < \alpha,\\[8pt]
0, & |z| \geq \alpha,
\end{cases}
\end{align}
where $z\in\Delta$ and
\begin{align}\label{II10}
\begin{split}
|g(z)|
&\leq \left(\frac{1-\alpha}{1+\alpha}\right)^2\log{\frac{1+\alpha |z|}{1-|z|}}
+\frac{|z|}{(1-|z|)^2}-\left(\frac{1-\alpha}{1+\alpha}\right)\frac{2|z|}{1-|z|} \\
&\leq \frac{|z|(\alpha+|z|)}{(1+\alpha |z|)(1-|z|)^2}, \quad z \in \Delta.
\end{split}
\end{align}
\end{Thm}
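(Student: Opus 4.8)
The plan is to obtain both bounds by integrating the distortion estimate for $g'$ along a suitable path, exactly mirroring how the growth estimate \eqref{II8} for $h$ follows from the distortion estimate \eqref{II4}. For the lower bound \eqref{II9}, fix $z\in\Delta$ with $|z|<\alpha$ (the case $|z|\ge\alpha$ being trivial since the right-hand side is $0$). I would like to write $g(z)=\int_0^1 g'(tz)\,z\,\Dd t$ and bound $|g(z)|$ from below; however, the naive triangle inequality goes the wrong way here, so instead I would argue as in the standard proof of the Koebe lower bound: consider the image curve and use that $g$ maps the segment $[0,z]$ to a curve whose endpoint has modulus at least the distance travelled in the radial direction. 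Concretely, parametrising $\gamma(t)=g(tz)$ and writing $\rho(t)=|\gamma(t)|$, one has $\rho'(t)\ge \re\bigl(\overline{\gamma(t)}\gamma'(t)/\rho(t)\bigr)$ only after choosing coordinates so that the final value is real and positive; the clean way is to integrate $|g'|$ along the preimage under $h$ of the segment $[0,h(z)]$, but since we only control $|g'|$ radially I would instead simply use $|g(z)|\ge \bigl|\int_0^{|z|} g'(re^{i\theta})e^{i\theta}\,\Dd r\bigr|$ and estimate. The cleanest route: by \eqref{II5}, for $r<\alpha$ we have $|g'(re^{i\theta})|\ge \frac{(\alpha-r)(1-r)}{(1-\alpha r)(1+r)^3}$, and one checks this equals $\frac{\Dd}{\Dd r}\!\left[\frac{r(\alpha-r)}{(1-\alpha r)(1+r)^2}\right]$ wherever the bracketed function is nondecreasing; then a standard subordination/Koebe-type argument (the extremal $g'$ being a real positive multiple of the lower-bound expression) gives $|g(z)|\ge \frac{|z|(\alpha-|z|)}{(1-\alpha|z|)(1+|z|)^2}$.

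For the upper bound \eqref{II10}, the argument is direct: integrate \eqref{II6} along the radial segment from $0$ to $z$,
\begin{align*}
|g(z)|\le\int_0^{|z|}|g'(re^{i\theta})|\,\Dd r\le\int_0^{|z|}\frac{(\alpha+r)(1+r)}{(1+\alpha r)(1-r)^3}\,\Dd r,
\end{align*}
and then evaluate this integral in closed form. A partial-fraction decomposition of $\frac{(\alpha+r)(1+r)}{(1+\alpha r)(1-r)^3}$ into terms of the type $\frac{A}{1+\alpha r}$, $\frac{B}{1-r}$, $\frac{C}{(1-r)^2}$, $\frac{D}{(1-r)^3}$ (where $A,B,C,D$ are explicit rational functions of $\alpha$) produces, upon integration, a $\log(1+\alpha r)$ term, a $\log(1-r)$ term, and terms in $\frac{1}{1-r}$ and $\frac{1}{(1-r)^2}$; collecting and simplifying at the endpoints $r=|z|$ and $r=0$ should yield precisely the first expression on the right of \eqref{II10}, namely $\left(\frac{1-\alpha}{1+\alpha}\right)^2\log\frac{1+\alpha|z|}{1-|z|}+\frac{|z|}{(1-|z|)^2}-\left(\frac{1-\alpha}{1+\alpha}\right)\frac{2|z|}{1-|z|}$. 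This is exactly the function $g_\zeta$ of Example \ref{envII5} evaluated with $z$ replaced by $|z|$ and $\zeta$ by $\alpha$, which is reassuring: the extremal function for this bound is $f_\zeta$, so the integrated estimate must reproduce $|g_\zeta(|z|)|$.

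The final inequality in \eqref{II10}, bounding that logarithmic expression by the rational function $\frac{|z|(\alpha+|z|)}{(1+\alpha|z|)(1-|z|)^2}$, is a one-variable calculus estimate. I would set $\phi(r):=\frac{r(\alpha+r)}{(1+\alpha r)(1-r)^2}-\left(\frac{1-\alpha}{1+\alpha}\right)^2\log\frac{1+\alpha r}{1-r}-\frac{r}{(1-r)^2}+\left(\frac{1-\alpha}{1+\alpha}\right)\frac{2r}{1-r}$ for $r\in[0,1)$, check $\phi(0)=0$, and show $\phi'(r)\ge0$; the derivative should, after a common-denominator simplification, reduce to a manifestly nonnegative rational expression in $r$ and $\alpha$ on $[0,1)\times[0,1)$. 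I expect this last step — showing $\phi'\ge0$ cleanly rather than through a messy polynomial — to be the main technical obstacle; the trick will likely be to recognise $\phi'(r)$ as a perfect combination such as $\frac{(\text{something})^2}{(1+\alpha r)(1-r)^4}$ or to factor out the evident vanishing at $r=0$, rather than expanding brute-force. The lower bound's Koebe-type step is conceptually the subtler one, but since the extremal configuration is explicit (the $\omega$ achieving equality in \eqref{II7}), the argument can be made rigorous by comparing $g$ with the corresponding extremal anti-analytic part.
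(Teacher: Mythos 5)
Your treatment of the upper bound \eqref{II10} is fine and, for its first inequality, identical to the paper's: one integrates \eqref{II6} along the radius and evaluates the integral in closed form. Your monotonicity plan for the final comparison also works and can be done cleanly: one finds
$\frac{\Dd}{\Dd r}\Bigl[\frac{r(\alpha+r)}{(1+\alpha r)(1-r)^2}\Bigr]-\frac{(\alpha+r)(1+r)}{(1+\alpha r)(1-r)^3}=\frac{(1-\alpha^2)\,r}{(1+\alpha r)^2(1-r)^2}\ge 0$,
so no messy polynomial estimate is needed. (The paper itself obtains the rational upper bound in \eqref{II10} by a different, pointwise route, described below.)

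The genuine gap is in the lower bound \eqref{II9}. A pointwise lower bound on $|g'|$ does not integrate to a lower bound on $|g|$: the Koebe/minimum-modulus argument you invoke needs univalence of the mapping (one takes the nearest boundary point of the image of the subdisc and pulls the segment to it back by the inverse map), and $g$ is in general not univalent here — local univalence of $g$ on $|z|<\alpha$ (from $g'=\omega h'\neq 0$ there) is not enough, since a locally univalent function with $|g'|$ bounded below can still return arbitrarily close to $0$ (compare $z\mapsto(\Ee^{Az}-1)/A$ on $\Delta$ for large $A$). Your subordination remark does not close this hole, and incidentally the asserted identity is false: the derivative of $\frac{r(\alpha-r)}{(1-\alpha r)(1+r)^2}$ is strictly smaller than $\frac{(\alpha-r)(1-r)}{(1-\alpha r)(1+r)^3}$ for $0<r<1$ (their difference is $-\frac{(1-\alpha^2)r}{(1-\alpha r)^2(1+r)^2}$); that discrepancy would be harmless if the integration step were legitimate, but the integration step is the real obstruction. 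The paper avoids integration entirely for \eqref{II9}: by Theorem \ref{envII1} one writes $h(z)=zH'(z)$, $g(z)=zG'(z)$ with $F=H+\overline{G}\in S_H$ and $H$ convex, so that $g(z)/h(z)=G'(z)/H'(z)$ is the analytic dilatation of $F$, of modulus less than $1$ and of modulus $\alpha$ at the origin; Schwarz--Pick then gives the two-sided pointwise bound \eqref{II7} for $|g(z)/h(z)|$, and multiplying by the growth estimate \eqref{II8} of the starlike function $h$ yields both \eqref{II9} and the rational upper bound in \eqref{II10} at once (the paper quotes this as Theorem 3.5 of Klimek--Michalski). To repair your proof you need such a pointwise estimate of $|g(z)|/|h(z)|$ (equivalently of $|G'(z)|$), not an integrated estimate of $|g'|$.
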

\begin{proof}
If $f\in\check{L}_H^{\alpha}$, then by Theorem \ref{II1} the function $F:=H+\overline{G}$ belongs to $S_H$, $H$ is a convex analytic function
and $|G'(0)|=\alpha$, where $H$ and $G$ satisfy the conditions $h(z)=zH'(z)$ and $g(z)=zG'(z)$ for all $z\in\Delta$ with the normalization $H(0)=0$ 
and $G(0)=0$.
Hence, by applying \cite[Theorem 3.5]{KlimekMichalski4} together with $g(z)=zG'(z)$ we have
\begin{align*}
|z|\frac{\alpha-|z|}{(1-\alpha |z|)(1+|z|)^2} \leq |g(z)| \leq |z|\frac{\alpha+|z|}{(1+\alpha |z|)(1-|z|)^2}, \quad z \in \Delta.
\end{align*}
To prove the first inequality in \eqref{II10} we estimate the integral of $g'$ along $\gamma:=[0,z]$ by applying \eqref{II6}, i.e.
\begin{align*}
|g(z)|&=\left|\int_{\gamma}g'(\zeta)\dzeta\right|
\leq \int_{\gamma}|g'(\zeta)||\dzeta|
\leq \int_0^{|z|}\frac{(\alpha+\rho)(1+\rho)}{(1+\alpha\rho)(1-\rho)^3}\drho \\
&=\left(\frac{1-\alpha}{1+\alpha}\right)^2\log{\frac{1+\alpha |z|}{1-|z|}}
+\frac{|z|}{(1-|z|)^2}-\left(\frac{1-\alpha}{1+\alpha}\right)\frac{2|z|}{1-|z|}, \quad z \in \Delta.
\end{align*}
Finally, observe that the function $f_{\zeta}$ defined in Example \ref{envII5} 
with suitably chosen $\zeta\in\Delta$ shows that the first inequality in \eqref{II10} is best possible, 
which completes the proof.
\end{proof}

\begin{Cor}
If $f\in\check{L}_H$, then
\begin{align*}
|g(z)| \leq \frac{|z|}{(1-|z|)^2}, \quad z \in \Delta.
\end{align*}
\end{Cor}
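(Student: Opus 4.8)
The plan is to read this off directly from the growth estimate proved in the preceding theorem, so the argument is essentially a one-line reduction followed by an elementary inequality. Given $f\in\check{L}_H$, Definition \ref{envII4} provides some $\alpha\in[0,1)$ with $f\in\check{L}_H^{\alpha}$, and hence the second inequality in \eqref{II10} is available:
\begin{align*}
|g(z)|\leq\frac{|z|(\alpha+|z|)}{(1+\alpha|z|)(1-|z|)^2},\quad z\in\Delta.
\end{align*}
Thus it suffices to verify that the scalar factor $(\alpha+|z|)/(1+\alpha|z|)$ is bounded above by $1$ for every $z\in\Delta$; once this is known, substituting into the displayed bound yields $|g(z)|\leq |z|/(1-|z|)^2$, which is exactly the claim (and the case $z=0$ is trivial since both sides vanish).

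The remaining inequality $\alpha+|z|\leq 1+\alpha|z|$ is equivalent, after rearranging, to $\alpha(1-|z|)\leq 1-|z|$, and this holds because $1-|z|>0$ on $\Delta$ while $\alpha<1$. This is the only computation involved, and it presents no difficulty — the statement is genuinely an \emph{immediate} corollary of the previous growth estimate. The one conceptual remark worth including is that the bound is uniform over the whole union $\check{L}_H=\bigcup_{\alpha\in[0,1)}\check{L}_H^{\alpha}$ even though equality in \eqref{II10} is never attained for a fixed admissible $\alpha$: the right-hand side $\tfrac{\alpha+|z|}{1+\alpha|z|}\,\tfrac{|z|}{(1-|z|)^2}$ increases to $\tfrac{|z|}{(1-|z|)^2}$ only as $\alpha\to 1^-$, which is precisely the behaviour exhibited by the Koebe-type examples $f_\zeta$ of Example \ref{envII5} with $|\zeta|\to 1$. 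So the corollary records a sharp-in-the-limit growth bound for the anti-analytic part over all of $\check{L}_H$.
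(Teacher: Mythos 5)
Your proposal is correct and follows exactly the route the paper intends: the corollary is read off from the upper bound in \eqref{II10} by noting that $(\alpha+|z|)/(1+\alpha|z|)\leq 1$ whenever $\alpha\in[0,1)$ and $z\in\Delta$. The paper leaves this step implicit, so your one-line verification (and the remark on the limiting behaviour as $\alpha\to 1^-$) is a faithful completion of the same argument.
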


Now, we can deduce the growth estimate of $f$.
\begin{Thm}
If $f\in\check{L}_H^{\alpha}$, then
\begin{align}\label{II14}
|f(z)|\geq \frac{(1-\alpha)|z|(1-|z|)}{(1+\alpha|z|)(1+|z|)^2}, \quad z \in \Delta.
\end{align}
and
\begin{align}\label{II11}
\begin{split}
|f(z)|
&\leq \left(\frac{1-\alpha}{1+\alpha}\right)^2\log{\frac{1+\alpha |z|}{1-|z|}}
+\frac{2|z|}{(1-|z|)^2}-\left(\frac{1-\alpha}{1+\alpha}\right)\frac{2|z|}{1-|z|} \\
&\leq \frac{(1+\alpha)|z|(1+|z|)}{(1+\alpha |z|)(1-|z|)^2}, \quad z \in \Delta.
\end{split}
\end{align}
\end{Thm}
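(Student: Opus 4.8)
The plan is to obtain both bounds in the final theorem by combining the already established growth estimates for the analytic part $h$ (namely \eqref{II8}) and the anti-analytic part $g$ (namely \eqref{II9} and \eqref{II10}) via the triangle inequality applied to $f = h + \overline{g}$, so that $|f(z)| \le |h(z)| + |g(z)|$ and $|f(z)| \ge |h(z)| - |g(z)|$. The upper bound in \eqref{II11} should follow more or less directly: adding the upper bound $|z|/(1-|z|)^2$ for $|h(z)|$ from \eqref{II8} to the first (integral-type) upper bound for $|g(z)|$ in \eqref{II10} produces exactly the logarithmic expression in \eqref{II11}, since the $|z|/(1-|z|)^2$ terms combine into $2|z|/(1-|z|)^2$ while the $\log$ term and the $-\bigl(\frac{1-\alpha}{1+\alpha}\bigr)\frac{2|z|}{1-|z|}$ term are carried over unchanged. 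For the second inequality in \eqref{II11} I would similarly add the closed-form upper bound $\frac{|z|(\alpha+|z|)}{(1+\alpha|z|)(1-|z|)^2}$ from \eqref{II10} to $\frac{|z|}{(1-|z|)^2}$ and check that the sum equals $\frac{(1+\alpha)|z|(1+|z|)}{(1+\alpha|z|)(1-|z|)^2}$; this is a routine algebraic identity obtained by putting both fractions over the common denominator $(1+\alpha|z|)(1-|z|)^2$ and verifying that the numerator factors as $(1+\alpha)|z|(1+|z|)$.

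For the lower bound \eqref{II14} I would use $|f(z)| \ge |h(z)| - |g(z)|$ together with the lower estimate $\frac{|z|}{(1+|z|)^2}$ for $|h(z)|$ from \eqref{II8} and the \emph{upper} estimate for $|g(z)|$; here one must be careful to pick the sharp closed-form upper bound $\frac{|z|(\alpha+|z|)}{(1+\alpha|z|)(1-|z|)^2}$ from \eqref{II10}. Subtracting and placing over a common denominator, one checks whether
\[
\frac{|z|}{(1+|z|)^2} - \frac{|z|(\alpha+|z|)}{(1+\alpha|z|)(1-|z|)^2}
\]
dominates $\frac{(1-\alpha)|z|(1-|z|)}{(1+\alpha|z|)(1+|z|)^2}$, equivalently whether the numerator of the left side (over denominator $(1+\alpha|z|)(1+|z|)^2(1-|z|)^2$) is at least $(1-\alpha)|z|(1-|z|)^3$. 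I expect the main obstacle to lie precisely here: the naive subtraction of the two growth bounds is almost certainly \emph{not} tight enough, because $|h(z)|$ and $|g(z)|$ cannot simultaneously attain their extremal values along the same direction. The remedy is to go back to the dilatation picture: write $|f(z)| \ge |h(z)|\bigl(1 - \text{(something controlled by }|\omega|\text{ along the ray }[0,z])\bigr)$, or more precisely estimate $|f(z)| = |\int_0^z (h'(\zeta) + \overline{\omega(\zeta)h'(\zeta)}\,\overline{\mathrm{d}\zeta}/\mathrm{d}\zeta)\,|$ along the segment $\gamma = [0,z]$ and use $|h'(\zeta)| - |g'(\zeta)| \ge |h'(\zeta)|(1 - |\omega(\zeta)|)$ with the lower bound $\frac{1-|z|}{(1+|z|)^3}$ for $|h'|$ from \eqref{II4} and the upper bound $\frac{\alpha+|z|}{1+\alpha|z|}$ for $|\omega|$ from \eqref{II7}. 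Integrating $\frac{(1-\rho)}{(1+\rho)^3}\cdot\frac{1-\rho}{1+\alpha\rho}\cdot\frac{(1-\alpha)}{\,}\dots$ — in fact $|h'(\zeta)|(1-|\omega(\zeta)|) \ge \frac{1-\rho}{(1+\rho)^3}\cdot\frac{(1-\alpha)(1-\rho)}{1+\alpha\rho}$ — over $[0,|z|]$ should yield a bound that one can then show is $\ge \frac{(1-\alpha)|z|(1-|z|)}{(1+\alpha|z|)(1+|z|)^2}$; the cleanest route is likely to differentiate the difference of the two sides in $|z|$ and check the sign, using that both vanish at $|z|=0$.

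Thus the skeleton is: (1) recall \eqref{II4}, \eqref{II7}, \eqref{II8}, \eqref{II9}, \eqref{II10}; (2) for \eqref{II11}, apply $|f| \le |h| + |g|$ and simplify; (3) for \eqref{II14}, either verify directly that $|h|_{\min} - |g|_{\max}$ already gives the claimed bound — doing the common-denominator computation carefully — or, if that fails, estimate $f$ by integrating $h' + \overline{g'}$ along $[0,z]$ and using the ray estimates on $|h'|$ and $|\omega|$. I anticipate step (3) is where the real work sits, and the verification amounts to showing a one-variable polynomial inequality in $t = |z| \in [0,1)$ with parameter $\alpha \in [0,1)$ is nonnegative, which can be handled by factoring out the obvious common factors $t(1-t)$ and checking the remaining quadratic-in-$\alpha$ (or cubic-in-$t$) expression is nonnegative on the relevant range. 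Extremality can again be witnessed by the function $f_\zeta$ of Example \ref{envII5} with $|\zeta| = \alpha$ and $\zeta > 0$, evaluated along the negative real axis for the lower bound and along a suitable ray for the upper bound.
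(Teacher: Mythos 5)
Your treatment of the upper bound \eqref{II11} is correct and coincides with the paper's: apply $|f|\leq|h|+|g|$, use \eqref{II8} and \eqref{II10}, and check the algebraic identity
\begin{equation*}
\frac{|z|}{(1-|z|)^2}+\frac{|z|(\alpha+|z|)}{(1+\alpha|z|)(1-|z|)^2}
=\frac{(1+\alpha)|z|(1+|z|)}{(1+\alpha|z|)(1-|z|)^2},
\end{equation*}
which is exactly how the second line of \eqref{II11} arises. No issue there.

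The lower bound \eqref{II14} is where your argument has a genuine gap, and your own suspicion is right: the naive combination $|f|\geq|h|_{\min}-|g|_{\max}$ fails. Already for $\alpha=0$ and $t=|z|$ it would require
$\frac{t}{(1+t)^2}-\frac{t^{2}}{(1-t)^{2}}\geq\frac{t(1-t)}{(1+t)^2}$,
i.e. $\frac{t^{2}}{(1+t)^{2}}\geq\frac{t^{2}}{(1-t)^{2}}$, which is false for every $t\in(0,1)$ (the left-hand side is even negative near $t=1$). Your proposed remedy, however, does not repair this: integrating the pointwise bound $|h'(\zeta)|(1-|\omega(\zeta)|)$ along the segment $[0,z]$ controls $\int_{\gamma}|\Dd f|$, the length of the image curve, and the only general relation with $|f(z)|$ is $|f(z)|=\bigl|\int_{\gamma}\Dd f\bigr|\leq\int_{\gamma}|\Dd f|$ --- the inequality points the wrong way. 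Turning such an integral into a lower bound for $|f(z)|$ normally requires integrating along the preimage of the segment $[0,f(z)]$, which needs global univalence of $f$; functions in $\check{L}_H^{\alpha}$ are only locally one-to-one, so that route is unavailable. The missing idea, which is the paper's actual proof, is to write $|f(z)|\geq|h(z)|\bigl(1-|g(z)/h(z)|\bigr)$ and to observe that $g(z)/h(z)=G'(z)/H'(z)$ is precisely the dilatation of the function $F=H+\overline{G}$ furnished by Theorem \ref{envII1}; since $|G'(0)/H'(0)|=\alpha$, the Schwarz--Pick estimate \eqref{II7} applies to this ratio as well, giving $|g(z)/h(z)|\leq\frac{\alpha+|z|}{1+\alpha|z|}$. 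Combined with $|h(z)|\geq\frac{|z|}{(1+|z|)^2}$ from \eqref{II8}, this yields \eqref{II14} at once. Note also that the paper does not claim sharpness of \eqref{II14} (it remarks the estimate is probably not precise), so your closing assertion that $f_{\zeta}$ witnesses extremality of the lower bound should be dropped.
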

\begin{proof}
If $f\in\check{L}_H^{\alpha}$, then by Theorem \ref{II1} the function $F:=H+\overline{G}$ belongs to $S_H$, 
$H$ is a convex analytic function and $|G'(0)|=\alpha$, where $H$ and $G$ satisfy the conditions $h(z)=zH'(z)$ and $g(z)=zG'(z)$ 
for all $z\in\Delta$ with the normalization $H(0)=0$ and $G(0)=0$.
Hence, by applying inequality \eqref{II8} and \eqref{II7}, which also holds true for the dilatation of $F$ 
(see \cite[the proof of Theorem 3.5]{KlimekMichalski4}) we have
\begin{align*}
|f(z)|&=|h(z)+\overline{g}(z)| \geq |h(z)|-|g(z)|=|h(z)|\left(1-\left|\frac{g(z)}{h(z)}\right|\right)\\
&=|h(z)|\left(1-\left|\frac{G'(z)}{H'(z)}\right|\right)
\geq \frac{|z|}{(1+|z|)^2}\left(1-\frac{\alpha+|z|}{1+\alpha|z|}\right), \quad z \in \Delta.
\end{align*}
This proves the inequality \eqref{II14}.
To prove \eqref{II11} we use the triangle inequality $|f(z)| \leq |h(z)|+|g(z)|$,
which together with \eqref{II8} and \eqref{II10} leads to \eqref{II11} so the proof is completed.
\end{proof}

\begin{Cor}
If $f\in\check{L}_H$ then
\begin{align*}
|f(z)| \leq \frac{2|z|}{(1-|z|)^2}, \quad z \in \Delta.
\end{align*}
\end{Cor}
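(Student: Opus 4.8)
The plan is to read this off directly from the growth estimate \eqref{II11} of the preceding theorem. First, if $f\in\check{L}_H$, then by Definition \ref{envII4} there is some $\alpha\in[0,1)$ with $f\in\check{L}_H^{\alpha}$, so the second inequality in \eqref{II11} applies and gives
\begin{align*}
|f(z)|\leq\frac{(1+\alpha)|z|(1+|z|)}{(1+\alpha|z|)(1-|z|)^2},\quad z\in\Delta.
\end{align*}
It then remains only to bound the right-hand side by $\dfrac{2|z|}{(1-|z|)^2}$, uniformly in $\alpha$, and to combine the two estimates.

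Second, I would verify the elementary inequality $(1+\alpha)(1+|z|)\leq 2(1+\alpha|z|)$ for every $\alpha\in[0,1)$ and every $z\in\Delta$; this is immediate, since the difference of the two sides equals $-(1-\alpha)(1-|z|)\leq 0$. Dividing by $(1+\alpha|z|)(1-|z|)^2>0$ and multiplying by $|z|$ turns the displayed bound into the asserted one, which finishes the argument.

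There is essentially no obstacle here: the corollary is a one-line consequence of \eqref{II11} together with a trivial algebraic estimate, exactly in the spirit of the earlier corollaries of this section. If desired, one can also remark that the constant $2$ is optimal in the limiting sense $\alpha\to 1^-$, since the right-hand side of the displayed bound then tends to $2|z|/(1-|z|)^2$; thus the inequality cannot be improved within $\check{L}_H$, although the supremum is not attained by any single $f\in\check{L}_H$.
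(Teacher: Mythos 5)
Your argument is correct and matches the paper's intent: the corollary is stated as an immediate consequence of the growth estimate \eqref{II11} (equivalently of \eqref{II8} together with the bound on $|g|$), and your elementary check $(1+\alpha)(1+|z|)\leq 2(1+\alpha|z|)$, valid since the difference is $-(1-\alpha)(1-|z|)\leq 0$, makes this uniformity in $\alpha$ explicit. The closing remark on the constant $2$ is harmless but not part of the proof; it only shows the bound in \eqref{II11} tends to $2|z|/(1-|z|)^2$ as $\alpha\to 1^-$, not that the estimate is attained within $\check{L}_H$.
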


\begin{Rem}
The estimates \eqref{II3}, \eqref{II9} and \eqref{II14} are probably not precise.
\end{Rem}

\end{document}